\newtheorem{thm}{Theorem}[section]
\newtheorem{lem}[thm]{Lemma}
\newtheorem{definition}[thm]{Definition}
\newtheorem{corol}[thm]{Corollary}
\title{Properties of uniform tangent sets and \\ Lagrange multiplier rule \thanks{This work was partially supported by
the Sofia University "St. Kliment Ohridski" fund "Research \& Development"
under contract 80-10-220/22.04.2017 and by the Bulgarian National Scientific Fund under Grant DFNI-I02/10.}}
\author{Mira Bivas\footnotemark[2]\ \footnotemark[4]
\and Nadezhda Ribarska\footnotemark[3]\ \footnotemark[2]\ \footnotemark[5]
\and Mladen Valkov \footnotemark[3]\ \footnotemark[6]
}
\date{}
\begin{document}

\maketitle

\renewcommand{\thefootnote}{\fnsymbol{footnote}}

\footnotetext[2]{Institute of
Mathematics and Informatics, Bulgarian Academy of Sciences, G.Bonchev str., bl. 8, 1113 Sofia, Bulgaria}
\footnotetext[3]{Faculty of Mathematics and Informatics,  Sofia University, James Bourchier Boul. 5,  1126 Sofia, Bulgaria}
\renewcommand{\thefootnote}{\fnsymbol{footnote}}
\footnotetext[4]{email: mira.bivas@math.bas.bg}
\footnotetext[5]{email: ribarska@fmi.uni-sofia.bg}
\footnotetext[6]{email: mlado1992@abv.bg }

\begin{abstract}
The concept of uniform tangent sets was introduced and discussed in \cite{KR17}. This study is devoted to their further investigation and to generalization of the abstract Lagrange multiplier rule from \cite{KR17}.

\textbf{Key words:} approximating sets, Lagrange multiplier rule

\textbf{AMS Subject Classification:} 49J52 49K27 90C48 90C56
\end{abstract}

\section{Introduction}

 Uniform tangent sets were introduced and briefly discussed in \cite{KR17}. This concept has proven to be useful for obtaining a nonseparation result, an abstract Lagrange multiplier rule, a necessary optimality condition of Pontryagin maximum principle type for optimal control problems in infinite-dimensional state space (c.f. again \cite{KR17}). The proposed approach reveals the importance of the uniformity of the approximation for obtaining necessary optimality conditions in infinite-dimensional setting.

 In the present paper we further study uniform tangent sets and enhance the above mentioned abstract Lagrange multiplier rule.

 In the first section the definitions of a uniform  tangent  set and a sequence uniform  tangent  set (taken from \cite{KR17}) are stated. It is proven that they are equivalent. Moreover, another equivalent characterization is obtained, in which further uniformity (with respect to the parameter $\lambda$) is assumed. As a corollary, the connection of uniform tangent sets with the classical concept of Clarke tangent cone is made clear. Another advantage of the main theorem in this section is that the work with sequence uniform tangent sets is significantly simplified. A sufficient condition for existence of a uniform tangent set generating the respective Clarke tangent cone is given.

 The second section contains a generalization and refinement of the Lagrange multiplier rule from \cite{KR17}, consisting in replacement of uniform tangent cones with uniform tangent sets and the allowance of non-strict minima.

\section{Uniform tangent sets}

Throughout the paper  $X$ is  a Banach
space, $ {\mbox{\bf B}} $ (resp. $\bar {\mbox{\bf B}}$) {  is } the open
(resp. closed) unit ball centered at the origin.

The following definitions are from \cite{KR17}:

\begin{definition} \label{UnCone}
Let   $S$ be a  closed subset of $X$ and $x_0$ belong to $S$. We
say that the bounded  set  $D_S(x_0)$ is a uniform  tangent  set to $S$ at
the point $x_0$ if for each $\varepsilon >0$ there exists
$\delta>0$ such that for each $v \in D_S(x_0)$
and for each point $x \in S \cap (x_0 + \delta \bar {\mbox{\bf
B}})$ one can find $\lambda>0$   for which      $S \cap (x+t (v + \varepsilon \bar {\mbox{\bf
B}}))$ is non empty  for each $t \in
[0,\lambda]$.
\end{definition}

\begin{definition} \label{UnConeSequences}
Let   $S$ be a closed subset of $X$ and $x_0$ belong to $S$. We
say that the bounded set  $D_S(x_0)$ is a sequence  uniform tangent
set to $S$ at the point $x_0$ if for each $\varepsilon >0$ there
exists $\delta>0$ such that for each $v \in D_S(x_0)$ and for each point $x \in S \cap (x_0 + \delta \bar
{\mbox{\bf B}})$ one can find a sequence of positive reals
$t_m\to 0$  for which $S
\cap (x+t_m (v + \varepsilon \bar {\mbox{\bf B}}))$ is non empty
 for each positive integer $m$.
\end{definition}

The next theorem is the main result in this section. The idea of its proof is the same as in the proof of Lemma 2.1 in \cite{Trei83}, namely the method of Bishop \& Phelps \cite{BP} as modified by Ekeland \cite{E}.

\begin{thm}\label{equiv_def}
Let   $S$ be a closed subset of $X$ and $x_0$ belong to $S$. The following are equivalent
\begin{enumerate}[(i)]
\item $D_S(x_0)$ is a uniform tangent set to $S$ at the point $x_0$
\item $D_S(x_0)$ is a sequence  uniform tangent set to $S$ at the point $x_0$
\item for each $\varepsilon >0$ there exist
$\delta>0$  and $\lambda>0$ such that for each $v \in D_S(x_0)$
and for each point $x \in S \cap (x_0 + \delta \bar {\mbox{\bf
B}})$ the set $S \cap (x+t (v + \varepsilon \bar {\mbox{\bf
B}}))$ is non empty  for each $t \in
[0,\lambda]$.
\end{enumerate}
\end{thm}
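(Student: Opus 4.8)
The plan is to establish the cycle (iii)$\Rightarrow$(i)$\Rightarrow$(ii)$\Rightarrow$(iii), with essentially all of the content residing in the last implication. The first two are immediate: a uniform $\lambda$ as in (iii) is in particular an admissible choice of $\lambda$ for each pair $(v,x)$ in (i); and given the interval $[0,\lambda]$ furnished by (i), the sequence $t_m:=\lambda/m\to0$ lies in $[0,\lambda]$ and witnesses (ii), with the same $\delta$. So I would spend all the effort on (ii)$\Rightarrow$(iii).

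For (ii)$\Rightarrow$(iii), fix $\varepsilon>0$ and introduce slack by choosing $\varepsilon_1\in(0,\varepsilon)$; let $\delta_1>0$ be the radius that (ii) associates with $\varepsilon_1$, and set $M:=\sup_{v\in D_S(x_0)}\|v\|<\infty$ (recall $D_S(x_0)$ is bounded). The uniform constants of (iii) will come from an a priori containment: I would choose $\delta>0$ and $\lambda>0$ so small that every point lying within distance $\lambda(M+\varepsilon_1)$ of $S\cap(x_0+\delta\bar{\mbox{\bf B}})$ still sits inside $x_0+\delta_1\bar{\mbox{\bf B}}$ (e.g. $\delta=\delta_1/2$, $\lambda=\delta_1/(2(M+\varepsilon_1))$). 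This guarantees that every auxiliary point produced while ``travelling'' a time $\le\lambda$ in a direction $v\in D_S(x_0)$ starting from such an $x$ remains in the region where (ii) is available, which is exactly what makes the resulting $\lambda$ independent of $v$ and $x$.

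Now fix $v\in D_S(x_0)$ and $x\in S\cap(x_0+\delta\bar{\mbox{\bf B}})$ and study the scalar function $g(\tau):=\operatorname{dist}(x+\tau v,S)$, which is continuous, nonnegative, $\|v\|$-Lipschitz and vanishes at $\tau=0$. Claim (iii) is equivalent to $g(\tau)\le\varepsilon\tau$ on $[0,\lambda]$, and I would in fact prove the stronger bound $g(\tau)\le\varepsilon_1\tau$ by a differential inequality for $g$. Fix $\tau\in[0,\lambda)$; using the method of Bishop--Phelps as sharpened by Ekeland's variational principle, I would produce a point $\bar y\in S$ that is an almost nearest point of $S$ to $x+\tau v$ and is a strong minimum of $y\mapsto\|y-(x+\tau v)\|$ on $S$ (so that moving away from $\bar y$ cannot decrease this distance faster than a prescribed small slope). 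The containment above places $\bar y$ in $S\cap(x_0+\delta_1\bar{\mbox{\bf B}})$, so (ii) applies at $\bar y$ in the direction $v$: there are $w_m\in S$ and $s_m\downarrow0$ with $\|w_m-\bar y-s_mv\|\le s_m\varepsilon_1$. Since $w_m-(x+(\tau+s_m)v)=\bigl(\bar y-(x+\tau v)\bigr)+\bigl(w_m-\bar y-s_mv\bigr)$, one gets $g(\tau+s_m)\le\|\bar y-(x+\tau v)\|+s_m\varepsilon_1$, which, once the excess $\|\bar y-(x+\tau v)\|-g(\tau)$ is absorbed (see below), yields $D_{+}g(\tau)\le\varepsilon_1$. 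A standard comparison argument for continuous functions then upgrades $D_{+}g\le\varepsilon_1$ on $[0,\lambda)$ together with $g(0)=0$ to $g(\tau)\le\varepsilon_1\tau<\varepsilon\tau$, i.e. $S\cap(x+\tau(v+\varepsilon\bar{\mbox{\bf B}}))\ne\emptyset$ for every $\tau\in[0,\lambda]$, uniformly in $v$ and $x$.

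The main obstacle is precisely this Dini estimate. Because $S$ need not possess genuine nearest points in an infinite-dimensional Banach space, $\bar y$ is only an approximate minimizer, and the excess $\|\bar y-(x+\tau v)\|-g(\tau)$ must be controlled against the (uncontrolled, possibly very small) step sizes $s_m$ that (ii) supplies. This is exactly what the strong-minimum conclusion of Ekeland's principle is for, and it is where the slack $\varepsilon-\varepsilon_1$ is consumed: one must balance the Ekeland scaling parameter so that the distance decrease forced by the tangency move at $\bar y$ strictly dominates the perturbation slope, while keeping $\bar y$ nearer to $x$ than to the target $x+\tau v$, so that moving in the direction $v$ genuinely decreases the distance (the convexity, or ``no-overshoot'', point). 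Getting these three quantities to fit together is the technical heart of the argument.
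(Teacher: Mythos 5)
Your setup is fine: the trivial implications, the reduction of all content to (ii)$\Rightarrow$(iii), and the a priori containment giving $\delta,\lambda$ independent of $v$ and $x$ (essentially the paper's choice $\delta=\tilde\delta/2$, $\lambda=\tilde\delta/(2(M+1))$) are all correct. The gap is exactly the step you defer: the Dini estimate $D_+g(\tau)\le\varepsilon_1$ cannot be extracted from (ii) by the mechanism you describe, and Ekeland's strong-minimum conclusion is not ``exactly what it is for'' here. The circularity is this: you must choose $\bar y$, and hence freeze the excess $e:=\|\bar y-(x+\tau v)\|-g(\tau)>0$, \emph{before} you invoke (ii) at $\bar y$; hypothesis (ii) then hands you a sequence $s_m\downarrow 0$ over which you have no control, and nothing prevents $s_m\ll e$ for every $m$. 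Your chain of inequalities therefore yields only $(g(\tau+s_m)-g(\tau))/s_m\le\varepsilon_1+e/s_m$, which is vacuous. The Ekeland inequality $\|y-(x+\tau v)\|\ge\|\bar y-(x+\tau v)\|-\kappa\|y-\bar y\|$, $y\in S$, points in the wrong direction: it bounds from below the distance of \emph{other} points of $S$ to the fixed target, but it never turns the approximate minimizer into an exact one, whereas your estimate needs $\|\bar y-(x+\tau v)\|\le g(\tau)+o(s_m)$ for step sizes revealed only after $\bar y$ is fixed. In an infinite-dimensional space $e>0$ is unavoidable, so the plan breaks at every $\tau$ with $g(\tau)>0$. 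Note also that the pointwise bound $D_+g(\tau)\le\varepsilon_1$ follows easily \emph{from} (iii) (pick the near-nearest point with excess $h^2$ after $h$ is fixed, then use the uniform interval $[0,\lambda]$), so any derivation of it from (ii) must already contain the entire content of the implication; this is precisely the difference between (ii), which supplies \emph{some} sequence $t_m\downarrow 0$ depending on the point, and the classical ``for every sequence'' characterization of the Clarke cone, for which the excess-calibration trick does work.

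Two ways to close the gap. The paper (following Treiman) never fixes $\tau$: assuming $S\cap(\bar x+t_0(\bar v+\varepsilon\bar{\mbox{\bf B}}))=\emptyset$, it iterates near-maximal admissible steps inside the cone $\bar x+[0,t_0](\bar v+\varepsilon\bar{\mbox{\bf B}})$ --- take $\alpha_m$ the supremum of admissible step lengths from $x_m$, choose $\beta_m>\alpha_m-2^{-m}$ and $x_{m+1}\in S\cap(x_m+\beta_m(\bar v+\varepsilon\bar{\mbox{\bf B}}))$ --- and shows the limit $\hat x$ of the resulting Cauchy sequence is a point of $S\cap(x_0+\tilde\delta\bar{\mbox{\bf B}})$ from which \emph{no} step of any length in $(0,t_0-\beta)$ is admissible, contradicting (ii) at $\hat x$; the maximality defect $2^{-m}$ plays the role your excess cannot. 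Alternatively, if you want Ekeland as a black box, you must build the progress variable into the functional rather than fix it: supposing $g(t^*)>\varepsilon t^*$ for some $t^*\in[0,\lambda]$, minimize $F(y,\tau):=\|y-x-\tau v\|+c\,(t^*-\tau)$ over the complete metric space $S\times[0,t^*]$ with $\varepsilon_1<c<\varepsilon$ and Ekeland slope $\kappa<(c-\varepsilon_1)/(M+\varepsilon_1+1)$. The Ekeland point $(\bar y,\bar\tau)$ satisfies $F(\bar y,\bar\tau)\le F(x,0)=c\,t^*$, hence $\bar\tau<t^*$ and $\bar y\in S\cap(x_0+\delta_1\bar{\mbox{\bf B}})$; applying (ii) at $\bar y$ and passing to $(w_m,\bar\tau+s_m)$ decreases $F$ by at least $(c-\varepsilon_1)s_m$, while the strong-minimum property forbids a decrease exceeding $\kappa(M+\varepsilon_1+1)s_m$ --- a contradiction. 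The crucial difference from your plan is that $(\bar y,\bar\tau)$ is an \emph{exact} minimizer of the perturbed functional, so no excess ever appears. As written, your proof does not contain either mechanism, and its central estimate is unobtainable by the route you indicate.
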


\begin{proof}
The implications $(iii) \implies (i) \implies (ii)$ follow from the definitions of a uniform tangent set and a  sequence  uniform tangent set. For the proof of the theorem to be complete, it suffices to prove that $(ii)$ implies $(iii)$.

Let $D_S(x_0)$ be a sequence  uniform tangent set to $S$ at the point $x_0$. Let us fix an $\varepsilon \in (0,1)$. Then, there exists $\tilde \delta>0$ such that for each $v \in D_S(x_0)$ and for each point $x \in S \cap (x_0 + \tilde \delta \bar{\mbox{\bf B}})$ one can find a sequence of positive reals
$t_m \to 0$  for which
\begin{equation} \label{suts}
S \cap (x+t_m (v + \varepsilon \bar {\mbox{\bf B}})) \neq \emptyset
\end{equation}
 for each positive integer $m$.

 Let $M$ be an upper bound of $\{ \|v\| \ | \ v \in D_S(x_0) \}$ and  let us set $\delta := \frac{\tilde \delta}{2}$ and $\lambda := \frac{\tilde \delta}{2(M+1)}$. Let us fix $\bar x \in S \cap (x_0 + \delta\bar{\mbox{\bf B}} )$ and $\bar v \in D_S(x_0)$. We are going to prove that $ S \cap (\bar x+t(\bar v+ \varepsilon \bar{\mbox{\bf B}} )) \neq  \emptyset$
  for each $t \in [0, \lambda]$.

  Let us assume the contrary, that is there exists $t_0 \in [0, \lambda]$ such that
 \begin{equation}\label{contr}
 S \cap (\bar x+t_0(\bar v+ \varepsilon \bar{\mbox{\bf B}} )) = \emptyset \, .
 \end{equation}

 Let us denote $A := S \cap (\bar x+[0, t_0](\bar v+ \varepsilon \bar{\mbox{\bf B}} ))$.
 Due to the choice of $\delta$ and $\lambda$, we have that
 \begin{align*}
[0, t_0](\bar v+ \varepsilon \bar{\mbox{\bf B}} ) \subset [0, \bar \lambda](M \bar{\mbox{\bf B}} + \varepsilon \bar{\mbox{\bf B}} ) \subset
 \frac{\tilde \delta}{2(M+1)}(M+\varepsilon)  \bar{\mbox{\bf B}}\subset \frac{\tilde \delta}{2}  \bar{\mbox{\bf B}}
 \end{align*}
 and therefore
  \begin{align}\label{S_delta}
  A = S \cap (\bar x+[0, t_0](\bar v+ \varepsilon \bar{\mbox{\bf B}} )) \subset S \cap \left(\bar x+ \frac{\tilde \delta}{2}  \bar{\mbox{\bf B}} \right) \subset S \cap \left(x_0+ \tilde \delta  \bar{\mbox{\bf B}} \right) \, .
  \end{align}

  We are going to define inductively a sequence $\{x_m\}_{m=1}^{\infty} \subset A$ and a sequence $\{\beta_m\}_{m=1}^{\infty} \subset (0,+\infty)$, satisfying
  \begin{equation}\label{xm}
  x_{m+1} \in S \cap \left(\bar x + \left(\sum_{i=1}^{m} \beta_i\right) (\bar v + \varepsilon \bar{\mbox{\bf B}}) \right) \mbox{ and } \sum_{i=1}^{m} \beta_i <t_0 \mbox{ for every } m \in \mathbb{N} \, .
  \end{equation}
 Let $x_1 := \bar x \in A$ and $\beta_0:=0$. Having obtained $x_m $ and $\beta_0, \dots, \beta_{m-1}$, let us set
  \begin{equation}\label{sup}
 \alpha_m := \sup\left\{\alpha \in  \left(0, t_0 - \sum_{i=1}^{m-1} \beta_i\right] \ | \ S \cap (x_m +\alpha(\bar v + \varepsilon \bar{\mbox{\bf B}})) \neq \emptyset  \right\}  \, .
  \end{equation}
 Since $x_m \in A$ by the inductive assumption, \eqref{S_delta} and \eqref{suts}  imply that the above set is nonempty and $\alpha_m > 0$.
 Therefore we can choose $$\beta_m \in \left(\max\{0, \alpha_m - 2^{-m}\}, \alpha_m\right]$$ such that $S \cap (x_m + \beta_m (\bar v + \varepsilon \bar{\mbox{\bf B}}) ) \neq \emptyset$ and then fix an arbitrary $x_{m+1}$ with
 $$x_{m+1} \in S \cap (x_m + \beta_m (\bar v + \varepsilon \bar{\mbox{\bf B}}) ) \ .$$
   We obtain that
 $$x_{m+1} \in S \cap \left(\bar x + \left(\sum_{i=1}^{m-1} \beta_i\right) (\bar v + \varepsilon \bar{\mbox{\bf B}}) + \beta_m (\bar v + \varepsilon \bar{\mbox{\bf B}}) \right) =$$ $$ =S \cap \left(\bar x + \left(\sum_{i=1}^{m} \beta_i \right)(\bar v + \varepsilon \bar{\mbox{\bf B}})  \right) $$
 due to the convexity of the ball. Moreover, $\sum_{i=1}^{m} \beta_i\le \sum_{i=1}^{m-1} \beta_i +\alpha_m\le t_0$ and $\sum_{i=1}^{m} \beta_i\not = t_0$ because $S \cap (\bar x+t_0(\bar v+ \varepsilon \bar{\mbox{\bf B}} )) = \emptyset$. Thus the construction of $\{x_m\}_{m=1}^{\infty}$ and $\{\beta_m\}_{m=1}^{\infty}$ is complete. 

 From  \eqref{xm} we have that
 $ \sum_{i=1}^{m} \beta_i \le t_0$ for every $ m \in \mathbb{N} $. Therefore the series $\sum_{i=1}^{\infty} \beta_i$ is convergent. 
 Hence, the sequence $\{x_m\}_{m=1}^{\infty}$ is Cauchy, because
 $$ x_{m+k} - x_{m} \in \sum_{i=m}^{m+k-1} \beta_i (\bar v + \varepsilon \bar{\mbox{\bf B}}) \subset \sum_{i=m}^{m+k-1} \beta_i (M  \bar{\mbox{\bf B}} + \varepsilon \bar{\mbox{\bf B}}) \subset \sum_{i=m}^{m+k-1} \beta_i (M+1) \bar{\mbox{\bf B}}$$
   implies
 $$  \|x_{m+k} - x_{m}\| < (M+1) \sum_{i=m}^{m+k} \beta_i   \, .$$
 Let us denote by $\hat x$ its limit point. The closedness of $S$ implies that $\hat x \in S$. Moreover, from \eqref{xm} we have that
  $$\hat x \in S \cap (\bar x + \beta (\bar v + \varepsilon \bar{\mbox{\bf B}}) )\subset A \, $$
  where $\beta :=  \sum_{i=1}^{\infty} \beta_i$. Indeed, the set $\bar x + \beta (\bar v + \varepsilon \bar{\mbox{\bf B}})$ is closed and
  $$\mathrm{dist}(x_m, \bar x + \beta (\bar v + \varepsilon \bar{\mbox{\bf B}}))\le \left(\sum_{i=m}^{\infty} \beta_i\right) (M+1)\longrightarrow_{m\to \infty} 0 \ .$$
  Due to $\beta\le t_0$ and \eqref{contr}, it follows that $\beta < t_0$.


Since
 $$x_{m+k} \in S \cap \left(x_m + \left(\sum_{i=m}^{m+k-1} \beta_i \right) (\bar v + \varepsilon \bar{\mbox{\bf B}})\right) \, , $$
the same reasoning as above implies
$$\hat x \in S \cap \left( x_m + \left(\sum_{i=m}^{\infty} \beta_i\right) (\bar v + \varepsilon \bar{\mbox{\bf B}}) \right) \ .$$
Therefore $\alpha_m - 2^{-m} <\beta_m < \sum_{i=m}^{\infty} \beta_i \le \alpha_m$ because of the definition of $\alpha_m$.
Let $t\in (0,t_0 - \beta)$ be arbitrary. If we assume that $S \cap (\hat x + t (\bar v + \varepsilon \bar{\mbox{\bf B}}) )\not = \emptyset$, then the inclusion above implies
$$
S \cap \left( x_m + \left(\sum_{i=m}^{\infty} \beta_i +t\right)(\bar v + \varepsilon \bar{\mbox{\bf B}}) \right)\not = \emptyset
$$
as well. Since
$$\sum_{i=m}^{\infty} \beta_i +t < \left( \beta -\sum_{i=1}^{m-1} \beta_i\right) +(t_0 -\beta)=t_0 - \sum_{i=1}^{m-1} \beta_i \ ,$$
from \eqref{sup} we have
$$\sum_{i=m}^{\infty} \beta_i +t \le \alpha_m < \beta_m +\frac{1}{2^m}$$
for every $m \in \mathbb{N}$. Therefore
$$0<t<\frac{1}{2^m} -\sum_{i=m+1}^{\infty} \beta_i\longrightarrow_{m\to \infty} 0 \ ,$$
a contradiction.
Thus we obtained that
\begin{align*}
S \cap (\hat x + (0, t_0 -\beta)(\bar v + \varepsilon \bar{\mbox{\bf B}})) =\emptyset \, ,
\end{align*}
which together with $\hat x \in A \subset S \cap \left(x_0+ \tilde \delta  \bar{\mbox{\bf B}} \right)$ contradicts \eqref{suts}. Hence, \eqref{contr} is not fulfilled which validates $(iii)$.
\end{proof}

In \cite{KR17} it is shown that the closure  of  a uniform tangent set  $D_S(x_0)$ to $S$ at $x_0$ is  a  uniform tangent set to $S$ at $x_0$ and the   convex hull   of a uniform tangent set is a uniform  tangent set. It is also obtained that
the   closure of  a sequence uniform tangent set is a sequence uniform tangent set. Now, we are able to show that
\begin{corol}\label{uts}
Let $S$ be a closed subset of  $X$ and let
$x_0\in S$. Let $D_S(x_0)$ be  a sequence  uniform tangent set to $S$ at the point $x_0 $. Then, the convex hull of $D_S(x_0)$ is a sequence  uniform  tangent set    to $S$ at $x_0$.
\end{corol}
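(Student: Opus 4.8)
The plan is to let Theorem \ref{equiv_def} do essentially all of the work, exploiting the already recorded fact that the convex hull of a (plain) uniform tangent set is a uniform tangent set \cite{KR17}. First I observe that the convex hull of a bounded set is bounded, so $\mathrm{conv}\,D_S(x_0)$ is a legitimate candidate in the sense of Definitions \ref{UnCone} and \ref{UnConeSequences}. Now, since $D_S(x_0)$ is a sequence uniform tangent set, the implication $(ii)\implies(i)$ of Theorem \ref{equiv_def} shows it is in fact a uniform tangent set to $S$ at $x_0$. By the cited result of \cite{KR17}, its convex hull $\mathrm{conv}\,D_S(x_0)$ is then again a uniform tangent set to $S$ at $x_0$. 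Applying the reverse implication $(i)\implies(ii)$ of Theorem \ref{equiv_def} to this convex hull converts the conclusion back into the desired statement that $\mathrm{conv}\,D_S(x_0)$ is a sequence uniform tangent set.

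This is precisely the simplification advertised before the theorem: the convex-hull stability, which is convenient to prove for uniform tangent sets because one there has a genuine step-length $\lambda$ available, is transported verbatim to sequence uniform tangent sets through the equivalence $(i)\Leftrightarrow(ii)$. Consequently there is no real obstacle once Theorem \ref{equiv_def} is in hand; the only non-formal ingredient is that equivalence.

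For a self-contained argument not relying on the convex-hull statement of \cite{KR17}, I would instead verify characterization $(iii)$ for $\mathrm{conv}\,D_S(x_0)$ directly. Fix $\varepsilon>0$, apply $(iii)$ to $D_S(x_0)$ with $\varepsilon/2$ to obtain $\delta_0,\lambda_0>0$, and let $M$ bound the norms on $D_S(x_0)$ (hence also on its convex hull). Given a finite convex combination $v=\sum_{j=1}^n\mu_j v_j$ and a point $x\in S$ near $x_0$, I would reach a point of $S$ in $x+t(v+\varepsilon\bar{\mbox{\bf B}})$ by performing $n$ consecutive tangent moves: move from the current point of $S$ along $v_1$ for time $\mu_1 t$, then along $v_2$ for time $\mu_2 t$, and so on. Convexity of the ball makes the accumulated displacement land in $t\bigl(v+\tfrac{\varepsilon}{2}\bar{\mbox{\bf B}}\bigr)\subset t(v+\varepsilon\bar{\mbox{\bf B}})$. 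The main obstacle in this route is the uniform bookkeeping: one must choose $\delta$ and $\lambda$ (in terms of $\delta_0,\lambda_0,M$) small enough that every one of the $n$ intermediate points stays inside $x_0+\delta_0\bar{\mbox{\bf B}}$, so that $(iii)$ remains applicable at each move, and this smallness must be independent of $n$ and of the particular combination. It is also worth noting that $(iii)$, rather than merely $(ii)$, is what allows the moves to compose, since it provides an entire admissible interval of step-lengths instead of only a sequence.
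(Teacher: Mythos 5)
Your proposal is correct and follows exactly the paper's own argument: the paper's proof of Corollary \ref{uts} likewise consists of transferring the convex-hull stability of (plain) uniform tangent sets (Lemma 2.5 in \cite{KR17}) to sequence uniform tangent sets via the equivalence $(i)\Leftrightarrow(ii)$ of Theorem \ref{equiv_def}. The additional self-contained sketch you offer is a reasonable alternative but is not needed, and is not what the paper does.
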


\begin{proof}
The proof follows directly from Theorem \ref{equiv_def} and Lemma 2.5 in \cite{KR17}.
\end{proof}

We are going to use the original definition of the Clarke tangent cone in Banach spaces from \cite{Rock80}:
\begin{definition} \label{ClarkeCone}
Let   $S$ be a  closed subset of $X$ and $x_0$ belong to $S$. We
say that  $v\in X$  is a tangent vector to $S$ at
the point $x_0$ if for each $\varepsilon >0$ there exist
$\delta>0$ and $\lambda>0$ such that for each point $x \in S \cap (x_0 + \delta \bar {\mbox{\bf
B}})$  the set     $S \cap (x+t (v + \varepsilon \bar {\mbox{\bf
B}}))$ is non empty  for each $t \in
[0,\lambda]$.
The set of all tangent vectors to $S$ at $x_0$ is called Clarke tangent cone and is denoted by $\hat T_S(x_0)$.
\end{definition}

Theorem \ref{equiv_def} implies
\begin{corol}
Let $S$ be a closed subset of $X$ and let
$x_0\in S$. Let $D_S(x_0)$ be  an  uniform tangent set to $S$ at the point $x_0 $. Then, $D_S(x_0)$ is a subset of $\hat T_S(x_0)$, where $\hat T_S(x_0)$ is the Clarke tangent cone to $S$ at $x_0$.
\end{corol}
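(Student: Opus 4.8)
The plan is to observe that characterization (iii) of Theorem \ref{equiv_def} is, for a single fixed vector $v$, literally the defining condition of the Clarke tangent cone in Definition \ref{ClarkeCone}. Thus the corollary reduces to applying Theorem \ref{equiv_def} and then reading off the tangency condition one vector at a time.

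First I would invoke Theorem \ref{equiv_def}: since $D_S(x_0)$ is a uniform tangent set (condition (i)), the equivalent condition (iii) holds. Spelled out, for every $\varepsilon>0$ there exist $\delta>0$ and $\lambda>0$ such that for every $v\in D_S(x_0)$ and every $x\in S\cap(x_0+\delta\bar{\mbox{\bf B}})$ the set $S\cap(x+t(v+\varepsilon\bar{\mbox{\bf B}}))$ is non-empty for all $t\in[0,\lambda]$. The crucial gain over the bare Definition \ref{UnCone} is that here $\delta$ and $\lambda$ are produced before, and hence independently of, the particular $v$ and $x$.

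Next I would fix an arbitrary $v\in D_S(x_0)$ and verify that $v\in\hat T_S(x_0)$. Given $\varepsilon>0$, take the $\delta,\lambda$ supplied by (iii) and restrict the quantifier ``for each $v\in D_S(x_0)$'' to this single $v$. We then obtain exactly: there exist $\delta>0$ and $\lambda>0$ such that for each $x\in S\cap(x_0+\delta\bar{\mbox{\bf B}})$ the set $S\cap(x+t(v+\varepsilon\bar{\mbox{\bf B}}))$ is non-empty for each $t\in[0,\lambda]$. This is precisely the requirement in Definition \ref{ClarkeCone} for $v$ to be a tangent vector, so $v\in\hat T_S(x_0)$. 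Since $v$ was arbitrary, $D_S(x_0)\subset\hat T_S(x_0)$.

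There is essentially no obstacle here: all the analytic work has already been carried out in Theorem \ref{equiv_def}, whose passage from (ii) to (iii) upgrades the $v$- and $x$-dependent $\lambda$ of the original definition to a uniform one. Without that upgrade the bare Definition \ref{UnCone} would only yield a $\lambda=\lambda(v,x)$, which is weaker than what the Clarke definition demands; so the only genuine content of the proof is the observation that (iii) removes exactly this dependence and matches Definition \ref{ClarkeCone} term for term.
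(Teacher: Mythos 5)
Your proposal is correct and follows exactly the paper's route: the paper gives this corollary with no written proof beyond the phrase ``Theorem \ref{equiv_def} implies,'' and the intended argument is precisely yours --- pass from (i) to (iii), fix a single $v\in D_S(x_0)$, and observe that (iii) specialized to that $v$ is verbatim the Clarke tangent cone condition of Definition \ref{ClarkeCone}. Your closing remark correctly identifies the only substantive point, namely that the $x$-uniform $\lambda$ of (iii), unavailable in the bare Definition \ref{UnCone}, is what the Clarke definition requires.
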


We obtain stronger connection between uniform tangent sets and the Clarke tangent cone, if the Clarke tangent cone is separable. To this end, we will need the definitions below:

\begin{definition}
The conical hull of a set $S$ is defined as
$$\mbox{cone } S := \mathbb{R}^{+}S = \{ \alpha s \ | \ \alpha > 0 \mbox{ and } s \in S \} \ .$$
\end{definition}

\begin{definition}
A set $S$ is said to generate the cone $C$, if $C$ is the closure of the conical hull of $S$. It is denoted by $\overline{\mbox{cone}} \, S \ .$
\end{definition}

Existence of a uniform tangent set generating the Clarke tangent cone is proven in the following lemma:

\begin{lem}
Let $X$ be a Banach space and let $S$ be its subset. Let $x\in S$ and let the Clarke tangent cone $\hat T_S(x)$ to $S$ at $x$ be separable.  Then there exists a uniform tangent set $D_S(x)$ to $S$ at $x$ which generates $\hat T_S(x)$.
\end{lem}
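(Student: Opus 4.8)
The plan is to build the required set by taking a countable dense subset of the cone and shrinking its elements so that, for every fixed accuracy $\varepsilon$, all but finitely many of them become tangentially trivial. Since the Clarke tangent cone $\hat T_S(x)$ is a closed convex cone (hence invariant under multiplication by positive scalars) and is assumed separable, I would first dispose of the trivial case $\hat T_S(x)=\{0\}$, in which $D_S(x):=\{0\}$ visibly works (both requirements are immediate). Otherwise $\hat T_S(x)\setminus\{0\}$ is dense in $\hat T_S(x)$ (for $v=0$ one approximates by $\alpha w\to 0$ with $w\neq 0$ in the cone, $\alpha\downarrow 0$), so I may fix a countable set $\{u_n\}_{n\ge 1}$ with $u_n\neq 0$ that is dense in $\hat T_S(x)$.

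Next I would set $d_n:=\frac{u_n}{n\,\|u_n\|}$ and $D_S(x):=\{d_n : n\ge 1\}$, so that $\|d_n\|=\frac1n$ and $D_S(x)$ is bounded. Because each $d_n$ is a strictly positive multiple of $u_n$, the conical hulls coincide, $\mbox{cone}\,\{d_n\}=\mbox{cone}\,\{u_n\}$, and taking closures gives $\overline{\mbox{cone}}\,D_S(x)=\overline{\{u_n\}}=\hat T_S(x)$ (using that $\hat T_S(x)$ is closed and that $\{u_n\}$ is dense). Hence $D_S(x)$ generates $\hat T_S(x)$ automatically, and the only real content is to verify that $D_S(x)$ is a uniform tangent set. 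For this I would check characterization $(iii)$ of Theorem~\ref{equiv_def}.

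Fix $\varepsilon>0$ and split the indices into $N_\varepsilon:=\{n : \|d_n\|>\varepsilon\}=\{n : 1/n>\varepsilon\}$, which is finite, and its cofinite complement. If $\|d_n\|\le\varepsilon$, then $0\in d_n+\varepsilon\bar{\mbox{\bf B}}$ (take the ball element $-d_n/\varepsilon$, whose norm is $\|d_n\|/\varepsilon\le 1$), so for every $x'\in S$ and every $t\ge 0$ the point $x'$ itself lies in $S\cap\bigl(x'+t(d_n+\varepsilon\bar{\mbox{\bf B}})\bigr)$; these directions put no restriction on the choice of $\delta,\lambda$. For each of the finitely many $n\in N_\varepsilon$, the vector $d_n=\frac{1}{n\|u_n\|}u_n$ is a positive multiple of a tangent vector and hence lies in $\hat T_S(x)$, so Definition~\ref{ClarkeCone} applied to $d_n$ with accuracy $\varepsilon$ yields $\delta_n,\lambda_n>0$ with $S\cap\bigl(x'+t(d_n+\varepsilon\bar{\mbox{\bf B}})\bigr)\neq\emptyset$ for all $x'\in S\cap(x+\delta_n\bar{\mbox{\bf B}})$ and all $t\in[0,\lambda_n]$. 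Setting $\delta:=\min_{n\in N_\varepsilon}\delta_n$ and $\lambda:=\min_{n\in N_\varepsilon}\lambda_n$ (and $\delta=\lambda=1$ if $N_\varepsilon=\emptyset$), both positive as finite minima of positive numbers, I obtain precisely statement $(iii)$ for $D_S(x)$; Theorem~\ref{equiv_def} then gives that $D_S(x)$ is a uniform tangent set.

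The one delicate point — and the obstacle a naive argument would run into — is securing a single $\delta$ that works for infinitely many directions of a dense set at once: an infimum of $\delta_n$ over all $n$ could collapse to $0$. The scaling $\|d_n\|=1/n$ is exactly what circumvents this, since for each $\varepsilon$ it relegates all but finitely many directions to the trivial situation $0\in d_n+\varepsilon\bar{\mbox{\bf B}}$, leaving only a finite minimum to control. Everything else (boundedness and the conical-hull computation) is routine.
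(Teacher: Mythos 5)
Your proposal is correct and follows essentially the same route as the paper's own proof: both take a countable dense subset of the separable cone, rescale its elements so their norms tend to zero (the paper uses $\lambda_n v_n$ with $v_n$ dense in $\hat T_S(x)\cap \Sigma$ and $\lambda_n \downarrow 0$; you use $u_n/(n\|u_n\|)$), and then, for a fixed $\varepsilon$, treat the finitely many directions of norm exceeding $\varepsilon$ via the Clarke tangent cone definition while the remaining ones are trivial because $0 \in v + \varepsilon \bar{\mbox{\bf B}}$. Your explicit handling of the degenerate case $\hat T_S(x)=\{0\}$ is a small refinement the paper glosses over, but otherwise the arguments coincide.
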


\begin{proof}
Let $\{ v_n\}_{n=1}^\infty$ be a dense subset of $\hat T_S(x)\cap \Sigma$ and let $\{ \lambda_n\}_{n=1}^\infty$ be an arbitrary sequence of positive numbers tending to zero. We claim that $D_S(x):=\{ \lambda_n v_n \ : \ n\in \mathbb{N} \}$ is a uniform tangent set to $S$ at $x$ which generates $\hat T_S(x)$. The last statement, that is $\overline{\textrm{cone}} \ D_S(x)=\hat T_S(x)$, is obvious. Now let $\varepsilon > 0$ be arbitrary. As $\| \lambda_n v_n\| =\lambda_n \longrightarrow_{n\to \infty} 0$, there exists $n_0\in \mathbb{N}$ such that $\| \lambda_n v_n\| <\varepsilon$ whenever $n > n_0$. Let $i\in \{ 1,2,\dots,n_0\}$. Since $\lambda_i v_i \in \hat T_S(x)$, there exists $\delta_i >0$ such that $\left( y+ t\left( \lambda_i v_i + \varepsilon \overline{B}\right) \right)\cap S\not = \emptyset$ whenever $y\in B_{\delta_i}(x)\cap S$ and $t\in (0,\delta_i )$. Put
$\delta := \min\left\{ \delta_1, \delta_2, \dots , \delta_{n_0}\right\} > 0$. Let $y\in B_{\delta}(x)\cap S$, $v\in D_S(x)$ and $t\in (0,\delta )$ be arbitrary. Then $v= \lambda_n v_n$ for some $n\in \mathbb{N}$. If $n\le n_0$, then $\delta \le \delta_n$ and thus $\left( y+ t\left( v + \varepsilon \overline{B}\right) \right)\cap S\not = \emptyset$. If $n > n_0$, then $\mathbf{0} \in v + \varepsilon \overline{B}$ and therefore $y\in \left( y+ t\left( v + \varepsilon \overline{B}\right) \right)\cap S\not = \emptyset$. This completes the proof.

\end{proof}

\section{Lagrange multiplier rule}

First, we need a technical result showing, roughly speaking, that by adding a strictly differentiable function to a given lower semicontinuous one, the uniform tangent set to the epigraph does not change significantly . Let us remind that if  $\varphi:X \longrightarrow \mathbb{R}  \cup \{+\infty\}$, its epigraph is the set
$epi\, \varphi :=\{ (x,p)\in X\times \mathbb{R}: \ \varphi(x)\le p\}$. We also remind the well-known definition of strictly differentiability (cf. \cite{Peano}):

\begin{definition}\label{strictlyDef}
A mapping $\psi: X\to \mathbb{R}$ is said to be strictly Fr\'echet
differentiable at $x_0 \in X$ if there exists a continuous
linear operator $\psi'(x_0): X \to \mathbb{R}$ such that for any
$\varepsilon > 0$ one can find $\delta>0$
  such that
  $$ |\psi(y) - \psi(x) - \psi'( x_0)(y - x)|\le\varepsilon  \|y - x\|$$
  whenever $\|x
- x_0\| < \delta$ and $\| y -x_0\| < \delta$.
\end{definition}

\begin{lem}\label{Mladen}
Let $\varphi:X \longrightarrow \mathbb{R}  \cup \{+\infty\}$ be lower semicontinuous and $x_0\in X$ be such that $\varphi(x_0)<+\infty$. Let $\psi: X\to \mathbb{R}$ be continuous on $X$ and strictly Fr\'echet
differentiable at $x_0$ and $\psi(x_0)=0$, $\psi'(x_0)=\mathbf{0}$. Let $D\subset X\times \mathbb{R}$. Then $D$ is a uniform tangent set to $epi \, \varphi$ at $(x_0,\varphi(x_0))$ if and only if  $D$ is a uniform tangent set to $epi (\varphi +\psi)$ at $(x_0,\varphi(x_0))$.
\end{lem}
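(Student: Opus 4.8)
The plan is to exploit the shear homeomorphism $\Phi:X\times\mathbb{R}\to X\times\mathbb{R}$, $\Phi(x,p):=(x,p-\psi(x))$, whose inverse is $\Phi^{-1}(x,p)=(x,p+\psi(x))$. A direct check shows that $\Phi\bigl(epi(\varphi+\psi)\bigr)=epi\,\varphi$ and, since $\psi(x_0)=0$, that $\Phi$ fixes the base point $(x_0,\varphi(x_0))$. Both epigraphs are closed, as $\varphi$ and (because $\psi$ is continuous) $\varphi+\psi$ are lower semicontinuous, so the uniform tangent set notion is meaningful for each. Since $-\psi$ satisfies exactly the same hypotheses as $\psi$ and $(\varphi+\psi)+(-\psi)=\varphi$, it suffices to prove a single implication, say that a uniform tangent set $D$ to $epi\,\varphi$ is a uniform tangent set to $epi(\varphi+\psi)$; the converse then follows by applying that implication with $\varphi$ replaced by $\varphi+\psi$ and $\psi$ replaced by $-\psi$.

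By Theorem \ref{equiv_def} I may verify characterization $(iii)$ for $epi(\varphi+\psi)$ while freely using characterization $(iii)$ for $epi\,\varphi$. The decisive analytic input is that, because $\psi(x_0)=0$ and $\psi'(x_0)=\mathbf{0}$, strict differentiability yields, for every $\varepsilon'>0$, a $\delta'>0$ with $|\psi(x')-\psi(x)|\le\varepsilon'\|x'-x\|$ whenever $x,x'\in x_0+\delta'\bar{\mbox{\bf B}}$ (in particular $|\psi(x)|\le\varepsilon'\|x-x_0\|$). Now fix $\varepsilon>0$ and let $M$ bound $\{\|v\|:v\in D\}$, which is finite since $D$ is bounded. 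I first choose auxiliary constants $\varepsilon''>0$ and $\varepsilon'>0$ so small that $\varepsilon''+\varepsilon'(M+\varepsilon'')\le\varepsilon$, then take the corresponding $\delta'$ together with the pair $\delta_1,\lambda_1$ furnished by $(iii)$ for $epi\,\varphi$ at level $\varepsilon''$.

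Given $v=(w,s)\in D$ and $z=(x,p)\in epi(\varphi+\psi)$ close to the base point, I push it forward: the $X$-coordinate of $\Phi(z)$ is unchanged and its vertical shift satisfies $|\psi(x)|\le\varepsilon'\|x-x_0\|$, so $\Phi(z)$ stays within $\delta_1$ of $(x_0,\varphi(x_0))$ once $z$ is close enough. Characterization $(iii)$ for $epi\,\varphi$ then produces, for each admissible $t$, a point $y'=(a,b)\in epi\,\varphi$ with $y'-\Phi(z)\in t(v+\varepsilon''\bar{\mbox{\bf B}})$, say $y'-\Phi(z)=t\bigl(v+(f_1,f_2)\bigr)$ with $(f_1,f_2)\in\varepsilon''\bar{\mbox{\bf B}}$. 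Pulling back, $\Phi^{-1}(y')\in epi(\varphi+\psi)$, and a componentwise computation gives
$$\Phi^{-1}(y')-z=\bigl(t(w+f_1),\,t(s+f_2)+(\psi(a)-\psi(x))\bigr).$$
Since $a-x=t(w+f_1)$ has norm at most $t(M+\varepsilon'')$, the estimate above bounds $|\psi(a)-\psi(x)|/t$ by $\varepsilon'(M+\varepsilon'')$, so the displacement lies in $t(v+\varepsilon\bar{\mbox{\bf B}})$ by the choice of $\varepsilon''$ and $\varepsilon'$. This exhibits a point of $epi(\varphi+\psi)\cap\bigl(z+t(v+\varepsilon\bar{\mbox{\bf B}})\bigr)$ and verifies $(iii)$.

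The step I expect to be the main obstacle is not the vertical estimate itself but the bookkeeping it imposes on the neighbourhoods: the strict-differentiability bound is valid only while both $x$ and the perturbed point $a$ remain in $x_0+\delta'\bar{\mbox{\bf B}}$. Since $a=x+t(w+f_1)$ can drift by up to $t(M+\varepsilon'')$, one must shrink $\lambda$ (and the admissible $\delta$) to quantities of the type $\lambda:=\min\{\lambda_1,(\delta'-\delta)/(M+\varepsilon'')\}$, with $\delta$ itself chosen so small that $\delta(1+\varepsilon')\le\delta_1$ and $\delta<\delta'$, so that $a$ never leaves the neighbourhood on which the estimate holds. Keeping these constants consistent — and confirming that equivalent product norms on $X\times\mathbb{R}$ do not affect the argument — is the only delicate part.
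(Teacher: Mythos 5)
Your proof is correct, and it takes a genuinely different route from the paper's. The paper never introduces the shear: given $(x,p)\in epi\,(\varphi+\psi)$ near the base point, it passes down to the graph point $(x,\varphi(x))$, and to certify that this point lies in the required neighbourhood of $(x_0,\varphi(x_0))$ it must invoke the lower semicontinuity of $\varphi$ at $x_0$ (for the lower bound $\varphi(x)>\varphi(x_0)-\tilde\delta$) and the continuity of $\psi$ at $x_0$ (for $|\psi(x)|<\tilde\delta/2$, which gives the upper bound on $\varphi(x)$); it then applies the tangency of $D$ to $epi\,\varphi$ at $(x,\varphi(x))$, obtains a direction $w_x(t)$ along which $\varphi$ grows by at most $t(s+\varepsilon/2)$, adds the strict-differentiability estimate for $\psi$ along the same displacement, and finally translates the resulting point vertically upward (using $p\ge(\varphi+\psi)(x)$ and the upward closedness of epigraphs) to return to the actual point $(x,p)$. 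Your homeomorphism $\Phi(x,p)=(x,p-\psi(x))$ makes the first and last of these steps unnecessary: $\Phi$ carries $epi\,(\varphi+\psi)$ onto $epi\,\varphi$ exactly, fixes the base point, and moves the vertical coordinate of a nearby point by at most $\varepsilon'\|x-x_0\|$, so lower semicontinuity of $\varphi$ and continuity of $\psi$ are needed only to ensure that both epigraphs are closed (so that the notion of uniform tangent set applies to them), not for any quantitative neighbourhood estimate. The analytic core is identical in the two arguments --- strict differentiability with $\psi'(x_0)=\mathbf{0}$ bounds the vertical distortion $|\psi(a)-\psi(x)|$ by $\varepsilon'(M+\varepsilon'')t$ along displacements of length at most $t(M+\varepsilon'')$, with the same bookkeeping keeping both $x$ and $a$ inside the ball where that estimate is valid --- and both proofs rely on the $\lambda$-uniform characterization $(iii)$ of Theorem \ref{equiv_def} (the paper implicitly, when it extracts a single $\tilde\lambda$ valid for all $(w,s)\in D$ and all nearby $(x,p)$) as well as on the same $\pm\psi$ symmetry to reduce the equivalence to one implication. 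What your version buys is conceptual economy: the lemma becomes the statement that uniform tangency is preserved under a shear homeomorphism that fixes the base point and is Lipschitz-small near it, with the epigraph structure entering only through closedness; the cost, as you note, is slightly more delicate constant management ($\delta(1+\varepsilon')\le\delta_1$, $\delta<\delta'$, $\lambda\le(\delta'-\delta)/(M+\varepsilon'')$), which indeed closes without difficulty.
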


\begin{proof}
Let $D\subset X\times \mathbb{R}$ be a uniform tangent set to $epi \, \varphi$ at $(x_0,\varphi(x_0))$. Then $D$ is bounded. Let $M>0$ be such that $M \geq \| (v,r)\| := max\{||v||,|r|\}$ for each $(v,r)\in D$.

Let us fix an arbitrary $\varepsilon>0$. Without loss of generality we may assume that $\varepsilon\le 1$.

The uniformity of the tangent set $D$ yields the existence of the positive reals $\tilde{\delta}>0$ and $\tilde{\lambda} >0$ such that whenever
$$(w,s) \in D, \ \ (x,p) \in (x_0+\tilde{\delta} \bar{B}, \ \  \varphi(x_0)+[-\tilde{\delta},\tilde{\delta}]) \cap epi \, \varphi \ \mbox{ and } \ t \in [0, \tilde{\lambda}]$$
 it is true that
$$\left[(x,p)+t\left((w,s)+\left(\frac{\varepsilon}{2} \bar{B}\right) \times \left[-\frac{\varepsilon}{2},\frac{\varepsilon}{2}\right]\right)\right] \cap epi \, \varphi\not = \emptyset \ .$$
Since $\varphi$ is lower semicontinuous at $x_0$ and $\varphi(x_0)<+\infty$, there exists $\delta_1>0$ such that $\varphi(x)>\varphi(x_0)-\tilde{\delta}$ for each $x\in B_{\delta_1}(x_0)$.
The function $\psi$ is continuous at $x_0$, therefore there exists $\delta_2>0$ such that $|\psi(x)-\psi(x_0)| = |\psi(x)| <\frac{\tilde{\delta}}{2}$ for each $x\in B_{\delta_2}(x_0)$. The strict  differentiability of $\psi$ at $x_0$ implies the existence of $\delta_3>0$ with
$$ |\psi(y) - \psi(x)| = |\psi(y) - \psi(x) - \psi'( x_0)(y - x)|\le\frac{\varepsilon}{2(M+1)}  \|y - x\|$$
  whenever $\|x - x_0\| < \delta_3$ and $\| y -x_0\| < \delta_3$ (we used that $\psi'(x_0)=\mathbf{0}$).

Now let us put $\delta := \min \left\{ \frac{\tilde{\delta}}{2},\delta_1,\delta_2, \frac{\delta_3}{2}\right\}$ and $\lambda:=\min \left\{ \frac{\delta_3}{3(M+1)}, \tilde{\lambda}\right\}$ and let us fix  arbitrary
$$(x,p)\in \left(\bar{B}_\delta(x_0)\times ((\varphi +\psi)(x_0)+[-\delta, \delta]\right)\cap epi (\varphi +\psi), \ (w,s) \in D  \mbox{ and }  t\in [0,\lambda ] .$$
Apparently $(\varphi +\psi)(x_0)=\varphi(x_0)$. Now $x\in \bar{B}_\delta(x_0)\subset \bar{B}_{\delta_1}(x_0)$ yields $\varphi(x)>\varphi(x_0)-\tilde{\delta}$.
On the other hand side, from  $x\in \bar{B}_\delta(x_0)\subset \bar{B}_{\delta_2}(x_0)$ we have $|\psi(x)| <\frac{\tilde{\delta}}{2}$ and, therefore, by the choice of $p$ and $\delta\le \frac{\tilde{\delta}}{2}$ we obtain
$$\varphi(x)=(\varphi +\psi)(x)-\psi(x)\le p +|\psi(x)|\le (\varphi +\psi)(x_0)+\delta +\frac{\tilde{\delta}}{2}\le \varphi(x_0)+\tilde{\delta} \ .$$
Thus we know that $\varphi(x)\in [\varphi(x_0)-\tilde{\delta},\varphi(x_0)-\tilde{\delta}]$. Apparently $(x,\varphi(x))$ belongs to the graph of $\varphi$, hence to its epigraph $epi \, \varphi$. This together with $\delta\le \tilde{\delta}$ yields $(x,\varphi(x)) \in (x_0+\tilde{\delta} \bar{B}, \ \  \varphi(x_0)+[-\tilde{\delta},\tilde{\delta}]) \cap epi \, \varphi$ and therefore
$$\left[(x,\varphi(x))+t\left((w,s)+\left(\frac{\varepsilon}{2} \bar{B}\right) \times \left[-\frac{\varepsilon}{2},\frac{\varepsilon}{2}\right]\right)\right] \cap epi \, \varphi\not = \emptyset \ ,$$
because $t\in [0,\lambda]\subset [0,\tilde{\lambda}]$. A straightforward computation shows that then there exists $w_x(t) \in w+\varepsilon \bar {B}$
such that
\begin{equation}\label{phi}
\frac{\varphi(x+t.w_x(t))-\varphi(x)}{t} \le s+\frac{\varepsilon}{2} \ .
\end{equation}
Now $\| w_x(t)\| \le \|w\| +\varepsilon \le M+1$ and so
$$\|( x+t.w_x(t))-x_0\| \le \| x-x_0\| + t \|w_x(t)\|\le \delta +\lambda (M+1)\le$$ $$\le  \frac{\delta_3}{2} +\frac{\delta_3}{3(M+1)}(M+1)<\delta_3 \ .$$
Having in mind that $\| x-x_0\|\le \delta <\delta_3$ as well, we obtain
$$|\psi(x+t.w_x(t)) - \psi(x)|  \le\frac{\varepsilon}{2(M+1)}  \| (x+t.w_x(t))- x\| =\frac{\varepsilon}{2(M+1)} t \|w_x(t)\|\le  \frac{\varepsilon}{2}t  \ .$$ Hence
\begin{equation}\label{psi}
\frac{\psi(x+t.w_x(t)) - \psi(x)}{t}\le \frac{\varepsilon}{2} \ .
\end{equation}
Adding up the inequalities (\ref{phi}) and (\ref{psi}), we obtain
$$\frac{(\varphi+\psi)(x+t.w_x(t))-(\varphi+\psi)(x)}{t} =$$ $$=\frac{\varphi(x+t.w_x(t))-\varphi(x)}{t} + \frac{\psi(x+t.w_x(t)) - \psi(x)}{t} \le s+\frac{\varepsilon}{2}+\frac{\varepsilon}{2}  = s+\varepsilon \ .$$
A straightforward computation again shows that the above inequality and the inclusion $w_x(t) \in w+\varepsilon \bar {B}$ imply
$$\left[(x,(\varphi+\psi)(x))+t\left((w,s)+\left({\varepsilon} \bar{B}\right) \times \left[-{\varepsilon},{\varepsilon}\right]\right)\right]\cap epi (\varphi+\psi)\not = \emptyset \ .$$
It remains to note that $(x,p)\in epi (\varphi+\psi)$ implies $p\ge (\varphi+\psi)(x)$ and thus
$$\left[(x,p)+t\left((w,s)+\left({\varepsilon} \bar{B}\right) \times \left[-{\varepsilon},{\varepsilon}\right]\right)\right]\cap epi (\varphi+\psi)\not = \emptyset $$
as well. This concludes the proof that $D$ is a uniform tangent set to $epi (\varphi+\psi)$. 

As $-\psi$ satisfies the same assumptions as $\psi$, the reverse implication ($D$ uniform tangent set to $epi (\varphi+\psi)$ implies $D$ uniform tangent set to $epi \, \varphi$) is exactly the same.
\end{proof}

The next definition is from \cite{KR17}. It uses a concept introduced in \cite{KRTs}, which
extents the notion of  a finite codimensional subset of $X$ (cf. Definition 1.5 in \cite{LY}).

\begin{definition}\label{solid}
Let $X$ be a Banach space and $S$ be a  subset of $X$. The set $S$
is said to be  quasisolid if its closed convex hull
$\overline{\mbox{\rm co}} \ S $ has nonempty interior in its
closed affine hull, i.e. if  there exists a point  $x_0  \in
\overline{\mbox{\rm co}}$ ${S}$ such that $\overline{\mbox{\rm
co}} \ \{S -y_0\} $ has nonempty interior in $\overline{\mbox{\rm
span}}$ $(S - y_0)$ (the closed subspace spanned by $ S-y_0 $).
\end{definition}

The following theorem is the main result in this section, namely an abstract Lagrange multiplier rule.

\begin{thm}\label{LagrangeM} Let us consider the optimization problem
$$\varphi (x)\to \min \ \mbox{ subject to } \ x \in S \ ,$$
where $\varphi:X \longrightarrow \mathbb{R}  \cup \{+\infty\}$ is lower semicontinuous and proper and $S$ is a closed subset of $X$.
Let $\bar x$ be a solution of the above problem. We set  $$\tilde S :=
S \times (-\infty, \varphi (\bar x)] \mbox{ and } \tilde  R
:= epi \, \varphi \subset X \times \mathbb{R}.$$
 Let $D_S$ be  a  uniform tangent set   to $S$ at  $ \bar x$,
 $D_{\tilde S}:= D_{  S} \times [-1, 0]$ and let $D_{\tilde { R}}$ be  a uniform tangent set    to  $\tilde {R} $ at  $(\bar x, \varphi (\bar x))$. We assume that the
set
$D_{\tilde S}-D_{\tilde R}$
is quasisolid. Then there exists a   pair
 $(\xi, \eta)\in
 X^*  \times  \mathbb{R}$ such that

\begin{enumerate}
\item [(i)] $(\xi,\eta)\not = (\mbox{\bf 0},0)$;
\item [(ii)] $\eta\in  \{0,1\}$;
 \item [(iii)] $\langle \xi ,v\rangle\ \le 0$  for every $v \in D_{S}$;

 \item [(iv)] $\langle \xi, w \rangle +\eta s\ge 0$   for every $(w,s)\in D_{\tilde R}$.

\end{enumerate}
\end{thm}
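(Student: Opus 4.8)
The plan is to recast the optimality of $\bar x$ as the impossibility of moving tangentially along $\tilde S$ and $\tilde R$ simultaneously so as to strictly improve $\varphi$, and then to convert this impossibility into the multipliers by a homogeneous Hahn--Banach separation. First I would record the elementary consequence of optimality: if $(x,p)\in\tilde S\cap\tilde R$ then $x\in S$ and $\varphi(x)\le p\le\varphi(\bar x)\le\varphi(x)$, so $p=\varphi(x)=\varphi(\bar x)$; in particular there is no $x\in S$ with $\varphi(x)<\varphi(\bar x)$. Using Corollary \ref{uts} I may assume $D_S$ and $D_{\tilde R}$ are convex (replacing them by their convex hulls only strengthens (iii) and (iv) and leaves $\overline{\mathrm{co}}(D_{\tilde S}-D_{\tilde R})$ unchanged). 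I would also keep in mind the downward vertical direction $(\mathbf{0},-1)$, which is genuinely uniformly tangent to $\tilde S=S\times(-\infty,\varphi(\bar x)]$ at $(\bar x,\varphi(\bar x))$ and whose presence will fix the sign of $\eta$.

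Next I would form the closed convex cone $\mathcal{C}:=\overline{\mathrm{cone}}\,\mathrm{co}\big(D_{\tilde S}\cup(-D_{\tilde R})\cup\{(\mathbf{0},-1)\}\big)$ in $X\times\mathbb{R}$. Since $\frac{1}{2}(D_{\tilde S}-D_{\tilde R})\subset\mathrm{co}(D_{\tilde S}\cup(-D_{\tilde R}))$, the quasisolidity of $D_{\tilde S}-D_{\tilde R}$ assumed in the theorem (Definition \ref{solid}) forces the convex hull generating $\mathcal{C}$ to have nonempty interior in its closed affine hull $Y$; adjoining the single direction $(\mathbf{0},-1)$ preserves this. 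This is exactly the hypothesis that makes an infinite dimensional separation available, and it produces the dichotomy on which everything turns: either $\mathbf{0}$ does not belong to the $Y$-interior of that hull, or it does.

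In the separating case there is a nonzero $\zeta_0\in Y^{*}$ with $\langle\zeta_0,\cdot\rangle\le0$ on $\mathcal{C}$; since $\mathcal{C}$ is a cone the separation constant is automatically $0$. Extending $\zeta_0$ by Hahn--Banach to a nonzero $(\xi,\eta)\in X^{*}\times\mathbb{R}$ I read off the conclusions by testing on the generators: on $(\mathbf{0},-1)$ I get $-\eta\le0$, i.e. $\eta\ge0$; on $(v,0)\in D_S\times[-1,0]=D_{\tilde S}$ I get $\langle\xi,v\rangle\le0$ for every $v\in D_S$, which is (iii); and on $-(w,s)\in -D_{\tilde R}$ I get $\langle\xi,w\rangle+\eta s\ge0$ for every $(w,s)\in D_{\tilde R}$, which is (iv). Nonvanishing of the functional is (i); dividing $(\xi,\eta)$ by $\eta$ when $\eta>0$ and leaving it unchanged when $\eta=0$ normalises $\eta$ into $\{0,1\}$ and preserves (i), (iii), (iv), which is (ii).

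The remaining, and hardest, step is to exclude the second alternative, where $\mathbf{0}$ is a $Y$-interior point of the hull; this must contradict optimality. Interiority lets me write a small downward vector $(\mathbf{0},-\rho)$, $\rho>0$, as a convex combination of points of $D_{\tilde S}$ and $-D_{\tilde R}$; matching the $X$-coordinates isolates a common direction $u$ that is a convex tangent direction to $S$ and, simultaneously, an epigraph direction along which $\varphi$ strictly decreases. The delicate point is that tangency yields only \emph{approximate} reachability with an $o(t)$ error differing between $S$ and $\mathrm{epi}\,\varphi$, so I cannot directly conclude that the $S$-point reached along $u$ has smaller value. Converting the two approximate motions into one honest feasible point $x\in S$ with $\varphi(x)<\varphi(\bar x)$ is where the full \emph{uniformity} of the tangent sets is indispensable: I would run the Bishop--Phelps--Ekeland iteration used in the proof of Theorem \ref{equiv_def}, alternately correcting the $S$-position and the epigraph value at nearby base points (where uniformity guarantees that the same $\delta,\lambda$ remain valid), and pass to the limit of the resulting Cauchy sequence. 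If the minimum is only non-strict I would first invoke Lemma \ref{Mladen}, perturbing $\varphi$ by a smooth $\psi$ with $\psi(\bar x)=0$ and $\psi'(\bar x)=\mathbf{0}$ so that $D_{\tilde R}$ stays a uniform tangent set to the perturbed epigraph, which makes the strict decrease genuine. The resulting feasible point of strictly smaller value contradicts the optimality of $\bar x$, ruling out the second alternative and completing the proof.
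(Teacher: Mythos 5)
Your proposal correctly handles the easy half of the argument, but the hard half is not proven. The separation case is fine: forming the cone over $\mathrm{co}\bigl(D_{\tilde S}\cup(-D_{\tilde R})\cup\{(\mathbf{0},-1)\}\bigr)$, separating $\mathbf{0}$ from it, extending by Hahn--Banach, and reading off (i)--(iv) from the generators is standard and essentially correct (modulo routine care when $\mathbf{0}$ does not lie in the closed affine hull). The fatal gap is the ``interior case'': excluding the alternative where $\mathbf{0}$ is a relative interior point is precisely the nonseparation theorem that constitutes the main content of \cite{KR17}, and your treatment of it is a gesture, not an argument. The claim that ``matching the $X$-coordinates isolates a common direction $u$'' is already imprecise (the convex combination does not split into two sub-combinations of equal weight), and, more seriously, a single common direction $u$ cannot carry the argument: moving along $u$ inside $S$ and inside $\mathrm{epi}\,\varphi$ produces two \emph{different} points within $t\varepsilon$ of $\bar x+tu$, and reconciling them requires correcting errors in \emph{arbitrary} directions of the affine hull --- which is exactly what interiority of $\mathbf{0}$ must be used for. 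The iteration in Theorem \ref{equiv_def} moves along one fixed direction inside one set, absorbing errors into the $\varepsilon$-ball around that direction; it does not adapt, in the way you assert, to an alternating two-set scheme in which the correction directions change at every step. Writing out such a scheme, with the quasisolidity/interiority hypothesis used to control the correction coefficients and with a convergence and contradiction analysis, is a substantial theorem (Theorem 3.8 of \cite{KR17}), not a paragraph.

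It is also worth contrasting your plan with the paper's actual proof, which is a two-line reduction rather than a from-scratch argument: set $\psi(x):=\|x-\bar x\|^2$, note that $\psi$ is continuous, strictly Fr\'echet differentiable at $\bar x$ with $\psi(\bar x)=0$ and $\psi'(\bar x)=\mathbf{0}$, so that $\bar x$ becomes a \emph{strict} minimizer of $\varphi+\psi$ on $S$ while, by Lemma \ref{Mladen}, $D_{\tilde R}$ remains a uniform tangent set to $epi\,(\varphi+\psi)$ at $(\bar x,\varphi(\bar x))$; then the already-established Lagrange multiplier rule for strict minima (Theorem 3.8 of \cite{KR17}) applies verbatim and yields (i)--(iv). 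You independently rediscovered this perturbation trick, but you buried it inside the unproven iteration instead of using it to reduce to a known result. If you are not allowed to cite \cite{KR17}'s Theorem 3.8 as a black box, your proof is incomplete at its central point; if you are, the perturbation-plus-Lemma-\ref{Mladen} reduction is the whole proof, and the separation machinery you set up is unnecessary.
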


\begin{proof}
Let  $\psi (x):= \|x-\bar x\|^2$ for each
$x \in X$. Then $\psi$ is continuous on $X$ and strictly Fr\'echet differentiable at
$\bar x$ with zero derivative. Now Lemma \ref{Mladen} allows us to use Theorem 3.8 of \cite{KR17}.
\end{proof}

\end{document}